\newtheorem{theorem}{Theorem}           
\newtheorem{lemma}{Lemma}               
\newtheorem{proposition}{Proposition}
\theoremstyle{definition}
\newtheorem{definition}{Definition}
\newtheorem{remark}{Remark}
\begin{document}

\begin{frontmatter}


\title{On Eigenvalue Problems Related to the Laplacian in a Class of Doubly Connected Domains
}


\author{Sheela Verma}
\ead{sheela.verma23@gmail.com}
\address{ Department of Mathematics and Statistics\\ Indian Institute of Technology Kanpur\\ Kanpur, India}

\author{G. Santhanam}
\ead{santhana@iitk.ac.in}
\address{ Department of Mathematics and Statistics\\ Indian Institute of Technology Kanpur\\ Kanpur, India}
\begin{abstract}
We study eigenvalue problems in some specific class of doubly connected domains. In particular, we prove the following.
\begin{enumerate}
\item Let $B_1$ be an open ball in $\mathbb{R}^n$, $n>2$ and $B_0$ be an open ball contained in $B_1$. Let  $\nu$ be the outward unit normal on $\partial B_1$. Then the first eigenvalue of the problem
\begin{align*}
\begin{array}{rcll}
\Delta u &=& 0 \, &\mbox{ in } \, B_1 \setminus \overline{B}_0  , \\
u &=& 0 \, &\mbox{ on } \, {\partial B_0}, \\
\frac{\partial u}{\partial \nu} &=& \tau u \, &\mbox{ on } \, {\partial B_1},
\end{array}
\end{align*}
attains its maximum if and only if $B_0$ and $B_1$ are concentric.

\item Let $(\mathbb{M}, ds^2)$ be a non-compact rank-$1$ symmetric space with curvature $-4 \leq K_{\mathbb{M}} \leq -1$ and $M = \mathbb{M}$ or $\mathbb{R}^{m}$. Let $B_0 \subset M$ be a geodesic ball of radius $r$ centered at a point $p \in M$. Let $D \subset M$ be a domain of fixed volume which is geodesically symmetric with respect to the point $ p\in M$ such that $\overline{{B}_0}\subset D$, and $\nu$ be the outward unit normal on ${\partial  (D \setminus \overline{B}_0)}$. Then the first non-zero eigenvalue of
\begin{align*}
\begin{array}{rcll}
\Delta u &=& \mu u \, &\mbox{ in } \,  D \setminus \overline{B}_0, \\
\frac{\partial u}{\partial \nu} &=& 0 \,  &\mbox{ on } \, {\partial  (D \setminus \overline{B}_0)},
\end{array}
\end{align*}
attains its maximum if and only if $D$ is a geodesic ball centered at $p$. 
\end{enumerate}

\end{abstract}

\begin{keyword}
Laplacian \sep Neumann eigenvalue problem \sep Steklov-Dirichlet eigenvalue problem\sep Doubly connected domain\sep Non-compact rank-1 symmetric space \sep Geodesically symmetric domain


 \MSC 35P15 \sep 58J50

\end{keyword}

\end{frontmatter}

\section{Introduction}
The study of an eigenvalue problem on a punctured domain has been a topic of interest. Several interesting results have been proved in this area by considering various boundary conditions on a punctured domain. In \cite{ARPS}, Ramm and Shivakumar considered Dirichlet boundary condition on a punctured ball in $\mathbb{R}^2$(a ball of smaller radius is removed from a ball) and proved that the first eigenvalue of this problem  attains its maximum if and only if the balls are concentric. In \cite{SK}, Kesavan proved the above result for higher dimensions. Later this result was extended to a wider class of domains \cite{ARRR,ACMV,ACAR,SK1,SK2}. In \cite{RTIB}, Banuelos et al. proved some classical inequalities between the eigenvalues of the mixed Steklov-Dirichlet and mixed Steklov-Neumann eigenvalue problems on a bounded domain in $\mathbb{R}^n$.

In this paper, we consider the mixed Steklov-Dirichlet problem and Neumann eigenvalue problem on some specific class of punctured domains and prove that the first non-zero eigenvalue of both problems is maximal for annular domain. In between, we also give the characterization of the first non-zero eigenvalue of the Neumann problem on an annulus. Now we state our main results.

For $n>2$, let $B_1$ be an open ball in $\mathbb{R}^n$ of radius $R_1$ and $B_{2}'$ be an open ball in $\mathbb{R}^n$ of radius $R_2$ such that $\overline{B}_1\subset B_2'$, where $0 < R_1 <R_2$. Consider the class $\mathcal{O}_{R_{1}, R_{2}} = \left\lbrace B_{2}' \setminus \overline{B}_1 \right\rbrace $ of domains in $\mathbb{R}^n$. Then we prove the following result.

\begin{theorem} \label{thm:steklov}
Consider the eigenvalue problem
\begin{align} \label{eqn:steklov}
\begin{array}{rcll}
\Delta u &=& 0 \, &\mbox{ in } \, B_{2}' \setminus \overline{B}_1  , \\
u &=& 0 \, &\mbox{ on } \, {\partial B_1}, \\
\frac{\partial u}{\partial \nu} &=& \tau u \, &\mbox{ on } \, {\partial B_{2}'},
\end{array}
\end{align}
where $\nu$ is the outward unit normal on $\partial B_{2}'$. Then annular domains (concentric balls) maximize the first eigenvalue of \eqref{eqn:steklov} in the class $\mathcal{O}_{R_{1}, R_{2}}$. 
\end{theorem}

\begin{definition} A domain $D$ in a Riemannian manifold $M$ is said to be \textit{geodesically symmetric} with respect to a point $p \in M$ if $D = \exp_{p} (N_0)$, where $N_0$ is a symmetric neighborhood of the origin in $T_{p}M$.
\end{definition}
 We now state our result for the first non-zero eigenvalue of Neumann problem on doubly connected domains in $\mathbb{R}^m$ and non-compact rank-$1$ symmetric space $\mathbb{M}$ with curvature $-4 \leq K_{\mathbb{M}} \leq -1$.
 
Let $(\mathbb{M}, ds^2)$ be a non-compact rank-$1$ symmetric space with dim $\mathbb{M} = m = kn$, where $k= \mbox{dim}_{\mathbb{R}} {\mathbb{K}}$; $\mathbb{K} = \mathbb{R}, \mathbb{C}, \mathbb{H}$ or $\mathbb{C}a$, and metric $ds^{2}$ is such that $-4 \leq K_{\mathbb{M}} \leq -1$. Let $M = \mathbb{M}$ or $\mathbb{R}^m$. Let $B_0 \subset M$ be a geodesic ball of radius $r_1$ centered at a point $p \in M$, and $D \subset M$ be a domain of fixed volume which is geodesically symmetric with respect to the point $p$ such that $\overline{B}_0 \subset D $. We consider the following eigenvalue problem on a domain $\Omega$ in the class $\mathcal{F}= \left\lbrace D \setminus \overline{B}_0\right\rbrace $. 
\begin{align} \label{eqn:neumann}
\begin{array}{rcll}
\Delta u &=& \mu u \, &\mbox{ in } \, \Omega, \\
\frac{\partial u}{\partial \nu} &=& 0 \,  &\mbox{ on } \, {\partial \Omega},
\end{array}
\end{align}
where $\nu$ is the outward unit normal on $\partial \Omega$.

\begin{theorem} \label{thm:neumann}
Among all domains $\Omega \subset M $ in the class $\mathcal{F}$, annular domain(concentric ball) maximizes the first non-zero eigenvalue of \eqref{eqn:neumann}.
\end{theorem}

The key idea to prove above results is to construct a suitable test function for the variational characterization of first non-zero eigenvalue for the corresponding eigenvalue problem.

This article is organized as follows. In Section 2, we consider the mixed Steklov-Dirichlet problem on an annulus and calculate a first eigenfunction by separation of variable technique. Then we prove some inequalities which are important to prove Theorem \ref{thm:steklov} and then provide a proof of Theorem \ref{thm:steklov}. In Section 3, we begin with the study of Neumann eigenvalue problem on annulus and then prove Theorem \ref{thm:neumann}.

\section{The mixed Steklov-Dirichlet problem} We begin with the study of mixed Steklov-Dirichlet eigenvalue problem on an annular domain. 

\subsection{The mixed Steklov-Dirichlet problem on an annular domain}
Let $0 < R_1 < R_2$ and $\Omega_{0} = \left\lbrace x \in \mathbb{R}^{n}\mid R_1 < \Vert x \Vert < R_2 \right\rbrace $ be an annular domain in $\mathbb{R}^n$. Let $B_1$ and $B_{2}$ be open balls in $\mathbb{R}^n$ centered at the origin and of radius $R_1$ and $R_2$, respectively. Consider the eigenvalue problem 
\begin{align}  \label{steklov on annular}
\begin{array}{rcll}
\Delta u &=& 0 \, &\mbox{ in } \, \Omega_{0}  , \\
u &=& 0 \, &\mbox{ on } \, {\partial B_1}, \\
\frac{\partial u}{\partial r} &=& \tau u \, &\mbox{ on } \, {\partial B_2}.
\end{array}
\end{align}
Let $\tau_{1} (\Omega_{0})$ denote the first eigenvalue of \eqref{steklov on annular}.
We compute the first eigenvalue and corresponding eigenfunction of \eqref{steklov on annular} by using the separation of variable technique. 

Consider a smooth function on $\Omega_{0}$ given by $h(r,u) = f(r) g(u)$, where $f$ and $g$ are real valued functions defined on $[R_{1}, R_{2}]$ and $\mathbb{S}^{n-1}$, respectively. Let $S(r)$ be the sphere of radius $r$ centered at the origin and $g(u)$ is an eigenfunction of $\Delta_{S(r)}$ with eigenvalue $\lambda (S(r))$. Then
\begin{align*}
\Delta h(r,u) &= g(u) \left[ -  f'' (r)- \frac{(n-1)}{r}  f ' (r)\right]  +  f  (r) \Delta_{S(r)} g(u) \\
 &=  g(u) \left[-  f'' (r)- \frac{(n-1)}{r}  f ' (r) + \lambda (S(r)) f  (r) \right].  
\end{align*}
Assume that $h(r,u)$ is a solution of \eqref{steklov on annular}. Then the function $f$ satisfies
\begin{align} \label{efun and evalue for steklov}
\begin{array}{rcll}
-  f'' (r)- \frac{(n-1)}{r}  f ' (r) + \lambda (S(r)) f  (r)=0, \\ 
 f (R_1) =0, \, f' (R_2) = \tau f (R_2).
 \end{array}
\end{align}

Let $0 = \lambda_{0} (S(r)) < \lambda_{1} (S(r)) \leq \lambda_{2} (S(r))\ldots \nearrow \infty $ be the eigenvalues of $\Delta_{S(r)}$. We will now prove that if $\lambda (S(r)) = \lambda_{0} (S(r))$ and $\tau_{0}$ is the corresponding eigenvalue of \eqref{efun and evalue for steklov}, then $\tau_{0}$ is the first eigenvalue of \eqref{steklov on annular}.

For $i \geq 0$, let $f_i$ be functions satisfying
\begin{align}\label{sep variable for steklov}
\begin{array}{rcll}
-  f_{i}'' (r)- \frac{(n-1)}{r}  f_{i} ' (r) + \lambda_i (S(r)) f_{i}  (r)=0, \\ 
 f_i (R_1) =0, \, f_{i}' (R_2) = \tau_i f_i (R_2),
 \end{array}
\end{align}
where $\tau_{i}$ is the first eigenvalue of \eqref{sep variable for steklov} and $f_{i}$ is an eigenfunction corresponds to $\tau_{i}$. Thus $f_i$ will not change sign in $[R_1, R_2]$. We may assume that $f_i$'s are positive functions for  $i \geq 0$. 
The above equation is equivalent to 
\begin{align*} 
r \,  f_{i} '' (r)+ (n-1)\,  f_{i} ' (r) - r \, \lambda_i (S(r)) \, f_i (r)=0, \\ 
f_i (R_1) =0, \, f_{i}' (R_2) = \tau_i f_i (R_2).
\end{align*}
We will now prove that $\tau_i \leq \tau_{i+1}$ for  $i \geq 0$.

Let's fix $i$. Multiply the equation
$$
r \,  f_{i} '' + (n-1)\,  f_{i} '  - r \, \lambda_i (S(r)) \, f_i =0
$$
by $f_{i+1}$ and the equation
$$
r \,  f_{i+1} '' + (n-1)\,  f_{i+1} '  - r \, \lambda_{i+1} (S(r)) \, f_{i+1} =0
$$
by $f_i$. By subtracting the second equation from the first and using the fact $\lambda_{i} (S(r)) \leq \lambda_{i+1} (S(r))$, we get
\begin{align*}
0 =& r^{n-2} \left(  f_{i+1}  \left(r \,  f_{i} '' + (n-1)\,  f_{i} '  - r \, \lambda_i (S(r)) \, f_i  \right) - f_{i}  \left( r \,  f_{i+1} '' + (n-1)\,  f_{i+1} '  - r \, \lambda_{i+1} (S(r)) \, f_{i+1}  \right)\right)   \\
 =&  r^{n-2} \left( r \, \left(  f_{i} '' \, f_{i+1} -   f_{i+1} '' \, f_{i} \right) + (n-1)\,  \left(  f_{i} ' \, f_{i+1} -   f_{i+1} ' \, f_{i} \right) - r f_i \, f_{i+1} \left( \lambda_{i} (S(r))  - \lambda_{i+1} (S(r)) \right)\right)  \\
 \geq & \, r^{n-1} \, \left(  f_{i} '' \, f_{i+1} -   f_{i+1} '' \, f_{i} \right) + (n-1) \, r^{n-2} \,  \left(  f_{i} ' \, f_{i+1} -   f_{i+1} ' \, f_{i} \right)  \\
 = & \, \left( r^{n-1} \, \left(  f_{i} ' \, f_{i+1} -   f_{i+1} ' \, f_{i} \right)\right) ' .
\end{align*}

The condition $\left( r^{n-1} \, \left(  f_{i} ' \, f_{i+1} -   f_{i+1} ' \, f_{i} \right)\right) ' \leq 0 $ shows that $\left( r^{n-1} \, \left(  f_{i} ' \, f_{i+1} -   f_{i+1} ' \, f_{i} \right)\right)$ is a decreasing function of $r$. As a result,
\begin{align*}
R_{2}^{n-1} \, \left(  f_{i} ' (R_{2}) \, f_{i+1} (R_{2}) -   f_{i+1} ' (R_{2}) \, f_{i} (R_{2}) \right) \leq R_{1}^{n-1} \, \left(  f_{i} ' (R_{1}) \, f_{i+1} (R_{1}) -   f_{i+1} ' (R_{1}) \, f_{i} (R_{1}) \right).
\end{align*}
 Since $ f_{i} (R_{1}) = f_{i+1} (R_{1})=0$,  we have  $
R_{2}^{n-1} \, \left(  f_{i} ' (R_{2}) \, f_{i+1} (R_{2}) -   f_{i+1} ' (R_{2}) \, f_{i} (R_{2}) \right) \leq 0
$. This gives $f_{i} ' (R_{2}) \, f_{i+1} (R_{2}) \leq  f_{i+1} ' (R_{2}) \, f_{i} (R_{2})$. As a consequence,  we have
\begin{align*}
\tau_{i} = \frac{f_{i} ' (R_{2})}{f_{i} (R_{2})} \leq \frac{f_{i+1} ' (R_{2})}{f_{i+1}  (R_{2})} = \tau_{i+1}.
\end{align*}
Thus $\tau_{0}$ is the smallest eigenvalue of \eqref{efun and evalue for steklov} with eigenfunction $f_0$ and $\lambda (S(r)) = \lambda_{0} (S(r))$. Hence $ \tau_{1} (\Omega_{0}) = \tau_0$.

Next we compute the explicit form of $f_0$. Since $\lambda_{0} (S(r))=0$, $f_0$ satisfies the equation
$
r \,  f_{0} '' (r)+ (n-1)\,  f_{0} ' (r) =0
$.
This can also be written as 
$
\left( r^{n-1} \, f_{0}' (r) \right)' = 0.
$
Using the fact $f_{0} (R_1) = 0$, we obtain 
$$
f_{0}(r) = \int_{R_1} ^{r} \frac{c}{r^{n-1}} \, dr
$$
for some constant $c$.

\begin{remark}
Since constant function is an eigenfunction of $\Delta_{S(r)}$ with eigenvalue zero, without loss of generality, we may assume it to be $1$. Then the function $f_0$, the first eigenfunction of the eigenvalue problem \eqref{steklov on annular}, is given by
\begin{align}
f_{0} (r)=
\begin{cases}
\ln r - \ln R_{1}  ,  & \text{for } \, n=2, \\
\left( \frac{1}{R_{1}^ {n-2}} - \frac{1}{r^ {n-2}} \right) ,  & \text{for } \, n \geq 3.
 \end{cases}
\end{align}
Note that the first eigenfunction of \eqref{steklov on annular} is radial, positive and increasing function on $\Omega_{0}.$
\end{remark}

\subsection{Computations towards the proof of Theorem \ref{thm:steklov}}
In this section, we first prove some results needed to prove Theorem \ref{thm:steklov} and then present a proof of the theorem.

We fix some notations used in this subsection. To prove Theorem \ref{thm:steklov}, without loss of generality, we may assume that $B_1$ is an open ball in $\mathbb{R}^n$ of radius $R_1$ centered at the origin, and $B_{2} '$ is an open ball of radius $R_2$ such that $0 < R_1 < R_2$ and $\overline{B}_1 \subseteq B_{2}'$. In particular, we consider all domains $D= B_{2} ' \setminus \overline{B_1}$ by fixing the ball $B_1$ and shifting the outer ball such that $\overline{B}_1 \subseteq B_{2} '$. 
Then the variational characterization for the first eigenvalue of \eqref{eqn:steklov} is given by
\begin{align} \label{var char for steklov}
\tau_1 = \inf \left\lbrace \frac{\int_{D}{\|\nabla u\|^2}dV}{\int_{\partial B_{2} '}{u^2}dv}  \bigm|   u = 0 \text{ on } \partial B_1\right\rbrace.
\end{align}

Since the Laplacian is invariant under the rotations and reflections, if the outer ball moves so that its centre lies on a sphere centered at the origin, the first eigenvalue of \eqref{eqn:steklov} must remain same. Thus it is enough to consider that the centre of outer ball moves along the $x$-axis.
Let $B_{2}'$ and $B_2$ be the balls of radius $R_2$ in $\mathbb{R}^n$ centered at $\left( x,0,0,\ldots,0\right) $ and the  origin, respectively and $0 < x \leq (R_2-R_1)$. 
We denote $\partial B_{2}'$ by $\mathbb{S}'(R_2)$. The co-ordinates of $\mathbb{S}'(R_2)$ are given by \\
$x_1 = R \cos (\phi_1)+ x$, 
$x_2 = R \sin (\phi_1) \cos (\phi_2) $,
$x_3 = R \sin (\phi_1) \sin (\phi_2) \cos (\phi_3) $,
$\cdots$ ,
$x_n = R \sin (\phi_1) \sin (\phi_2)$ $ \cdots \sin (\phi_{n-1})$, where $ \phi_1, \phi_2,\ldots,\phi_{n-2} \in [0, \pi] $ and $\phi_{n-1} \in [0, 2 \pi] $.

Let $p$ be a point on $\mathbb{S}'(R_2)$ and $r$ be the distance of $p$  from the origin. Then $r= \sqrt{R^2 + x^2+ 2 \, R \, x \cos(\phi_1)}$.
\begin{definition} For $a, b, c, z \in \mathbb{C}$, \textit{the hypergeometric function} ${_2\mbox{F}_1}\left( a, b; c; z\right)  $ is defined as follows.
\begin{enumerate}

\item \textbf{Power series form:} If $c$ is not a non-positive integer, and either $\vert z \vert <1$ or  $\vert z \vert =1$ with Re$\left( c-a-b\right)> 0. $ Then
\begin{align*}
{_2\mbox{F}_1}\left( a, b; c; z \right) := {\sum_{m=0}^{\infty}} \frac{\left(a\right) _m \, \left( b \right)_m }{\left(c \right)_m  }  \frac{{z}^m}{m!},
\end{align*}
where $(q)_m$ is the (rising) Pochhammer symbol defined by
\begin{align*}
(q)_m =
\left\{
\begin{array}{lcll}
1 & m=0, \\
q\, (q+1) \cdots (q+m-1) & m>0.
\end{array}
\right.
\end{align*} 

\item \textbf{Integral representation:} If $\mbox{Re}(c) > \mbox{Re}(b) > 0$ and $z$ is not a real number greater than or equal to $1$.
\begin{align*}
{_2\mbox{F}_1}\left( a, b; c; z \right) := \frac{\Gamma\left(  c\right) }{\Gamma \left( b\right) \, \Gamma \left( c-b\right) } \int_{0}^{1} {x^{b-1} \, (1-x)^{c-b-1} \, (1-zx)^{-a}}  dx,
\end{align*}
where $\Gamma(s) =\int_{0}^{\infty} e ^{-t} \, t^{s-1}\, dt$. 
\end{enumerate}
\end{definition}

We use MATHEMATICA \cite{matica} in Lemma \ref{lem:odd1} and Lemma \ref{lem:odd2} to compute some integrals and sum of some series.
\begin{lemma} \label{lem:odd1}
Let $x$ and $R_{2}$ be as above. Then for all $k \in \mathbb{N}$,
\begin{align}
\int_{-1}^{1} \left( \frac{\left( 1-s^2\right)^{k-1}}{\left( R_{2}^2 + x^2+ 2 \, R_{2} \, x \, s\right) ^{2k-1}}   \right)  d s \geq
 \left({\frac{2k-2}{2k-1}} \, {\frac{2k-4}{2k-3}} \, \cdots {\frac{4}{3}} \right) \left(\frac{1}{R_2^{4k-2}}\right).
\end{align}
\end{lemma}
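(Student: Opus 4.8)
The plan is to recognise the left-hand integral as a constant multiple of a spherical average and to reduce the inequality to two classical facts: Newton's shell theorem and the Cauchy--Schwarz inequality. Since the integrand carries the weight $(1-s^2)^{k-1}$ and the power $2k-1$, set $n=2k+1$, so that $(n-3)/2=k-1$ and $n-2=2k-1$. For a point $Q$ on the sphere $\partial B_2'$ of radius $R_2$ centred at $P_x=(x,0,\dots,0)$ one has $\lVert Q\rVert^2=R_2^2+x^2+2R_2x\cos\phi_1$; writing $s=\cos\phi_1$ and integrating out the remaining angular variables, the surface measure reproduces exactly the weight $(1-s^2)^{k-1}\,ds$ up to the constant $\omega_{n-2}R_2^{\,n-1}$, where $\omega_m=\lvert\mathbb{S}^m\rvert$ and $\omega_{n-1}/\omega_{n-2}=W_k:=\int_{-1}^{1}(1-s^2)^{k-1}\,ds$. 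Thus, with $h(Q)=\lVert Q\rVert^{-(n-2)}=\lVert Q\rVert^{-(2k-1)}$, the integrand of the lemma is $(1-s^2)^{k-1}h^2$. I would first note that it suffices to prove the sharper bound with the concentric constant $W_k$ in place of the displayed product: letting $x\to 0$ makes the left-hand side tend to $W_kR_2^{-(4k-2)}$, so any valid lower bound is at most $W_kR_2^{-(4k-2)}$, and in particular the stated product does not exceed $W_k$.

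The crux is the exact evaluation of the average of $h$ itself (not $h^2$). Because $h$ is harmonic on $\mathbb{R}^n\setminus\{0\}$ and the constraint $x\le R_2-R_1$ places the origin strictly \emph{inside} $B_2'$, Newton's shell theorem gives that the spherical mean of $h$ over $\partial B_2'$ equals the constant $R_2^{-(n-2)}=R_2^{-(2k-1)}$, independently of $x$. In the notation above this is the identity
\begin{equation*}
\int_{-1}^{1}(1-s^2)^{k-1}\bigl(R_2^2+x^2+2R_2xs\bigr)^{-(k-\frac12)}\,ds=W_k\,R_2^{-(2k-1)}.
\end{equation*}
(One can sanity-check this directly: for $k=1$ the substitution $u=R_2^2+x^2+2R_2xs$ reduces the left side to $\tfrac{1}{R_2x}[(R_2+x)-(R_2-x)]=2/R_2$, and the same substitution, yielding a polynomial times $u^{-(k-\frac12)}$, confirms the cancellation of the $x$-dependence for small $k$.)

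With this in hand the lemma follows from convexity. Let $d\nu=(1-s^2)^{k-1}\,ds/W_k$ be the associated probability measure on $[-1,1]$ and $g(s)=(R_2^2+x^2+2R_2xs)^{-(k-\frac12)}$, so that the left-hand side of the lemma equals $W_k\int g^2\,d\nu$ while the identity above reads $\int g\,d\nu=R_2^{-(2k-1)}$. By the Cauchy--Schwarz inequality $\int g^2\,d\nu\ge\bigl(\int g\,d\nu\bigr)^2=R_2^{-(4k-2)}$, whence
\begin{equation*}
\int_{-1}^{1}\frac{(1-s^2)^{k-1}}{\bigl(R_2^2+x^2+2R_2xs\bigr)^{2k-1}}\,ds=W_k\!\int g^2\,d\nu\ \ge\ W_k\,R_2^{-(4k-2)},
\end{equation*}
which dominates the right-hand side claimed in the lemma.

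The main obstacle is the shell-theorem step: the singularity of $h$ lies inside $B_2'$, so the ordinary mean value property does not apply, and the constancy in $x$ of the average of $h$ must be argued via Newton's theorem (or verified through the explicit half-integer-exponent integral). Everything else is bookkeeping of the surface-measure constant $\omega_{n-1}/\omega_{n-2}=W_k$ together with one application of Cauchy--Schwarz; a more computational alternative, closer to the author's approach, would instead evaluate the integral as a ${}_2\mathrm{F}_1$ and bound it, which is considerably more laborious. It is worth stressing that the route above produces the \emph{sharp} constant $W_k$, with equality exactly at $x=0$ (where $g$ is constant and Cauchy--Schwarz degenerates to an equality), i.e.\ precisely in the concentric configuration; this is the mechanism by which the lemma ultimately forces concentric balls to maximise $\tau_1$.
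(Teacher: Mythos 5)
Your proof is correct, but it takes a genuinely different route from the paper's. The paper evaluates the integral with MATHEMATICA as a hypergeometric function ${}_2\mathrm{F}_1$, re-expands it as a power series in $x/R_2$, simplifies each coefficient $\alpha(t)$ (again by MATHEMATICA), observes that every coefficient is positive, and keeps only the constant term $\alpha(0)$. You instead recognise the left-hand side as the spherical average of $\Vert Q\Vert^{-2(n-2)}$ over $\partial B_{2}'$ with $n=2k+1$, obtain the exact average of the harmonic function $\Vert Q\Vert^{-(n-2)}$ from Newton's shell theorem --- this identity is precisely the paper's Lemma \ref{lem:odd2} --- and conclude by Cauchy--Schwarz applied to the normalised measure $(1-s^2)^{k-1}\,ds/W_k$. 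Your argument is computer-free, proves Lemmas \ref{lem:odd1} and \ref{lem:odd2} simultaneously, exhibits the equality case $x=0$ explicitly, and transfers verbatim to the even-dimensional pair (Lemmas \ref{lem:even1} and \ref{lem:even2}) with $n=2k$; what it does not reproduce is the term-by-term positivity of all coefficients that the paper's expansion records. One small repair is needed: your justification that the displayed product is at most $W_k=\int_{-1}^{1}(1-s^2)^{k-1}\,ds$ (by letting $x\to 0$ in the asserted inequality) is circular, since it presupposes the lemma; but no inequality is required there, because the reduction formula $W_k=\frac{2k-2}{2k-1}\,W_{k-1}$ with $W_1=2$ shows that the product \emph{equals} $W_k$ (the terminal factor $\frac{4}{3}$ in the paper's notation being $2\cdot\frac{2}{3}$), so your bound $W_k\,R_2^{-(4k-2)}$ is exactly the asserted right-hand side and the proof closes.
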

\begin{proof} By using MATHEMATICA, we get
\begin{align} \nonumber
\int_{-1}^{1} \left( \frac{\left( 1-s^2\right)^{k-1}}{\left( R_{2}^{2} + x^2+ 2 \, R_{2} \, x \, s\right) ^{2k-1}}   \right)  d s &= \frac{\sqrt{\pi}}{(R_{2}^2 + x^2)^{2k-1}} \, \frac{\Gamma\left(k \right)}{\Gamma\left( k + \frac{1}{2} \right)} \, {{_2\mbox{F}_1} \left(k - \frac{1}{2},k;\,k + \frac{1}{2};\,\frac{4 R_{2}^2 {x}^2}{(R_{2}^2 + x^2)^2} \right) }.
\end{align}

By substituting power series form of ${_2\mbox{F}_1} \left(k - \frac{1}{2},k;\,k + \frac{1}{2};\,\frac{4 R_{2}^2 {x}^2}{(R_{2}^2 + x^2)^2} \right)$ in the above expression, we obtain
\begin{align} \nonumber
& \int_{-1}^{1} \left( \frac{\left( 1-s^2\right)^{k-1}}{\left( R_{2}^{2} + x^2+ 2 \, R_{2} \, x \, s\right) ^{2k-1}}   \right)  d s \\ 
 \nonumber
&= \frac{\sqrt{\pi}}{(R_{2}^2 + x^2)^{2k-1}} \, \frac{\Gamma\left(k \right)}{\Gamma\left( k + \frac{1}{2} \right)} {\sum_{m=0}^{\infty}} \frac{\left( k - \frac{1}{2} \right)_m \, \left( k \right)_m }{\left(k + \frac{1}{2} \right)_m \, m! } \left( \frac{4 R_{2}^2 {x}^2}{(R_{2}^2 + x^2)^2} \right)^m \\ 
\nonumber
\nonumber
&= \frac{ \sqrt{\pi} \, \Gamma(k)}{\Gamma\left( k + \frac{1}{2} \right)} {\sum_{m=0}^{\infty}}\, \frac{ \Gamma \left( k +m \right) \,   \left(k - \frac{1}{2} \right) }{ \Gamma \left( k \right) \, \left(k - \frac{1}{2} +m \right) \, m! } \, \frac{\left( 4 \, {x}^2\right) ^m}{R_{2}^{2m+4k-2}} \left(1+ \frac{x^2}{R_{2}^2} \right)^{-(2m+2k-1)}  \\ 
\label{eqn:lem221}
&= \frac{ \sqrt{\pi} \, \Gamma(k)}{ R_{2}^{4k-2} \, \Gamma\left( k + \frac{1}{2} \right)} {\sum_{m,l=0}^{\infty}}\, \frac{ 4^{m} \, \Gamma \left( k +m \right) \,   \left(k - \frac{1}{2} \right) \, (-1)^l }{ \Gamma \left( k \right) \, \left(k - \frac{1}{2} +m \right) \, m! \, l!} \frac{\Gamma (2m + 2k-1+l)}{\Gamma (2m + 2k-1)} \left( \frac{x}{R_{2}}\right)^{2l+2m} .
\end{align}
Then the coefficient of $\frac{x^{2t}}{R_{2}^{2t+4k-2}}$ for $t \geq 0$ is given by
\begin{align*}  
\frac{ \sqrt{\pi} \, \Gamma(k)}{ \Gamma\left( k + \frac{1}{2} \right)} \,  {\sum_{i=0}^{t}} \, \frac{ 4^{i} \, \Gamma \left( k +i \right) \,   \left(k - \frac{1}{2} \right) \, (-1)^{t-i} }{ \Gamma \left( k \right) \, \left(k - \frac{1}{2} +i \right) \, i! \, (t-i)!} \, \frac{\Gamma (i + 2k-1+t)}{\Gamma (2i + 2k-1)}.
\end{align*}
Using MATHEMATICA, we have
\begin{align*}  
 {\sum_{i=0}^{t}} \, \frac{ 4^{i} \, \Gamma \left( k +i \right) \,   \left(k - \frac{1}{2} \right) \, (-1)^{t-i} }{ \Gamma \left( k \right) \, \left(k - \frac{1}{2} +i \right) \, i! \, (t-i)!} \, \frac{\Gamma (i + 2k-1+t)}{\Gamma (2i + 2k-1)} = \frac{(2k-1) \, \Gamma \left(2k +t-1 \right) }{(2k+2t-1)\, \Gamma \left( 2k-1 \right) \Gamma\left(t+1 \right)}.
\end{align*}
We denote the coefficient of $\frac{x^{2t}}{R_{2}^{2t+4k-2}}$ by $\alpha(t)$. Then \eqref{eqn:lem221} can be written as
\begin{align*}
\int_{-1}^{1} \left( \frac{\left( 1-s^2\right)^{k-1}}{\left( R_{2}^{2} + x^2+ 2 \, R_{2} \, x \, s\right) ^{2k-1}}   \right)  d s =  {\sum_{t=0}^{\infty}} \alpha(t) \, \frac{x^{2t}}{R_{2}^{2t+4k-2}}.
\end{align*}
 Note that $x, R_{2} > 0$ and $\alpha(t)$ is positive for all $t\geq0$. Thus
 \begin{align*}
\int_{-1}^{1} \left( \frac{\left( 1-s^2\right)^{k-1}}{\left( R_{2}^2 + x^2+ 2 \, R_{2} \, x \, s\right) ^{2k-1}}   \right)  ds \geq
 \alpha(0) \left(\frac{1}{R_2^{4k-2}}\right),
\end{align*}
where 
\begin{align*} 
\alpha(0) &= \frac{ \sqrt{\pi} \, \Gamma(k)}{ \Gamma\left( k + \frac{1}{2} \right)} \, \frac{(2k-1) \, \Gamma \left(2k -1 \right) }{(2k-1)\, \Gamma \left( 2k-1 \right) }\\ 
&= \left( \frac{2k-2}{2k-1} \, \frac{2k-4}{2k-3} \, \cdots \,  \frac{4}{3} \right) .
\end{align*}
This proves the lemma. 
\end{proof}

\begin{lemma} \label{lem:odd2}
Let $x$ and $R_{2}$ be as above. Then for all $k \in \mathbb{N}$,
\begin{align} \label{eqn: lem odd 1}
\int_{-1}^{1} \left( \frac{\left( 1-s^2\right)^{k-1}}{\left( R_{2}^2 + x^2+ 2 \, R_{2} \, x \, s\right) ^{\frac{2k-1}{2}}}   \right)  d s = \left({\frac{2k-2}{2k-1}} \, {\frac{2k-4}{2k-3}} \, \cdots {\frac{4}{3}} \right) \left(\frac{1}{R_2^{2k-1}}\right).
\end{align}
\end{lemma}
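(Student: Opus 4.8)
The plan is to read the integrand geometrically and reduce the identity to the mean value property for harmonic functions, which simultaneously explains why the answer is independent of $x$. Writing $s=\cos\phi_1$, the denominator $R_2^2+x^2+2R_2xs$ is exactly the squared distance $r^2$ from the origin to a point of the sphere $\mathbb{S}'(R_2)=\partial B_2'$ of radius $R_2$ centered at $c=(x,0,\dots,0)$, so the integrand is $(1-s^2)^{k-1}\,r^{-(2k-1)}$. The decisive move is to choose the ambient dimension $n=2k+1$ (an integer for every $k\in\mathbb{N}$), so that $2k-1=n-2$ and $y\mapsto\Vert y\Vert^{-(n-2)}$ is the Newtonian kernel, harmonic on $\mathbb{R}^n\setminus\{0\}$; then $r^{-(2k-1)}$ is precisely the restriction of this harmonic function to $\mathbb{S}'(R_2)$.

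First I would rewrite $I(x):=\int_{-1}^1(1-s^2)^{k-1}(R_2^2+x^2+2R_2xs)^{-(2k-1)/2}\,ds$ as a genuine spherical average. Parametrizing $\partial B_2'$ by $y=c+R_2\omega$ with $\omega\in\mathbb{S}^{n-1}$ and integrating a function of $\omega_1=s$ alone, the area element produces the slice factor $(1-s^2)^{(n-3)/2}=(1-s^2)^{k-1}$ together with the constant $|\mathbb{S}^{n-2}|$, giving $\int_{\mathbb{S}'(R_2)}\Vert y\Vert^{-(n-2)}\,dA = R_2^{n-1}\,|\mathbb{S}^{n-2}|\,I(x)$. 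Dividing by the area $|\mathbb{S}'(R_2)|=R_2^{n-1}|\mathbb{S}^{n-1}|$ shows that $I(x)$ equals $\tfrac{|\mathbb{S}^{n-1}|}{|\mathbb{S}^{n-2}|}$ times the average of $\Vert y\Vert^{-(n-2)}$ over $\partial B_2'$. Now, since $0<x\le R_2-R_1<R_2$, the origin lies strictly inside $B_2'$; the function $\Phi(P)=\frac{1}{|\partial B_2'|}\int_{\partial B_2'}\Vert P-y\Vert^{-(n-2)}\,dA(y)$ is harmonic in the open ball and radially symmetric about $c$, hence (being a bounded radial harmonic function) constant, so its value at the origin equals its value at the center $c$, namely $R_2^{-(n-2)}=R_2^{-(2k-1)}$, with no dependence on $x$. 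Using $\tfrac{|\mathbb{S}^{n-1}|}{|\mathbb{S}^{n-2}|}=\sqrt\pi\,\tfrac{\Gamma(k)}{\Gamma(k+\frac12)}$ then yields $I(x)=\tfrac{\sqrt\pi\,\Gamma(k)}{\Gamma(k+\frac12)}\,R_2^{-(2k-1)}$, which is exactly the constant appearing in \eqref{eqn: lem odd 1}; indeed it is the same value as $\alpha(0)$ computed in Lemma \ref{lem:odd1}.

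The step requiring the most care---indeed the only real obstacle---is spotting the harmonic-kernel structure in the first place; once the exponent $\tfrac{2k-1}{2}$ is matched to $n-2$ with $n=2k+1$, the mean value property does all the work, and what remains is the book-keeping of the surface-area constant together with the verification that the singularity stays inside $B_2'$ throughout $0<x\le R_2-R_1$. Should one prefer the computational route of Lemma \ref{lem:odd1}, the same integral evaluates to $\tfrac{\sqrt\pi\,\Gamma(k)}{\Gamma(k+\frac12)}(R_2^2+x^2)^{-(2k-1)/2}\,{_2\mathrm{F}_1}\!\left(\tfrac{2k-1}{4},\tfrac{2k+1}{4};k+\tfrac12;\tfrac{4R_2^2x^2}{(R_2^2+x^2)^2}\right)$, and the identity follows from the quadratic transformation ${_2\mathrm{F}_1}\!\left(a,a+\tfrac12;2a+1;z\right)=\left(\tfrac{1+\sqrt{1-z}}{2}\right)^{-2a}$ with $a=\tfrac{2k-1}{4}$: since $1-z=\tfrac{(R_2^2-x^2)^2}{(R_2^2+x^2)^2}$ and $\tfrac{1+\sqrt{1-z}}{2}=\tfrac{R_2^2}{R_2^2+x^2}$, the hypergeometric factor collapses to $\bigl(\tfrac{R_2^2+x^2}{R_2^2}\bigr)^{(2k-1)/2}$, cancelling the prefactor's $(R_2^2+x^2)$ and leaving $R_2^{-(2k-1)}$. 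Either way, the structural feature absent from Lemma \ref{lem:odd1} is that the halved exponent $\tfrac{2k-1}{2}$ renders the kernel harmonic, forcing every positive power of $x$ to drop out and upgrading the inequality there to an exact equality here.
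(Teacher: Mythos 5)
Your proof is correct, and it takes a genuinely different route from the paper's. The paper evaluates the integral with MATHEMATICA as $\frac{\sqrt{\pi}\,\Gamma(k)}{(R_2^2+x^2)^{(2k-1)/2}\Gamma(k+\frac12)}\,{_2\mbox{F}_1}\bigl(\frac{2k-1}{4},\frac{2k+1}{4};k+\frac12;\frac{4R_2^2x^2}{(R_2^2+x^2)^2}\bigr)$, expands everything into a double power series in $x/R_2$, and again invokes MATHEMATICA to certify that every coefficient $\beta(t)$ with $t\geq 1$ vanishes. Your first argument replaces all of this with Newton's shell theorem: after choosing $n=2k+1$ so that the kernel $\Vert y\Vert^{-(n-2)}$ is harmonic away from the origin, the integral is (up to the explicit surface-area ratio $\vert\mathbb{S}^{n-1}\vert/\vert\mathbb{S}^{n-2}\vert=\sqrt{\pi}\,\Gamma(k)/\Gamma(k+\tfrac12)$) the average of that kernel over the shifted sphere, and your justification of the shell theorem is sound --- $\Phi$ is harmonic in the open ball because the singularity sits on $\partial B_2'$, radial about $c$ by rotational invariance of the surface measure, hence constant and equal to $R_2^{-(2k-1)}$; the hypothesis $0<x\le R_2-R_1$ keeps the origin strictly interior, and the $k=1$ and $x=0$ cases check out directly. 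Your second argument is also valid: the parameters $\bigl(\frac{2k-1}{4},\frac{2k+1}{4};k+\frac12\bigr)$ are exactly of the form $(a,a+\frac12;2a+1)$, and the classical quadratic transformation ${_2\mbox{F}_1}(a,a+\frac12;2a+1;z)=2^{2a}(1+\sqrt{1-z})^{-2a}$ collapses the hypergeometric factor to $\bigl((R_2^2+x^2)/R_2^2\bigr)^{(2k-1)/2}$, cancelling the prefactor. What your approach buys is a proof with no computer-algebra step and a structural explanation of why the $x$-dependence disappears here but only an inequality survives in Lemma \ref{lem:odd1} (where the exponent $2k-1$ makes the kernel subharmonic rather than harmonic); what the paper's approach buys is uniformity of method with Lemma \ref{lem:odd1}, where a genuine term-by-term positivity argument is unavoidable.
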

\begin{proof} By using MATHEMATICA, we get
\begin{align} \nonumber
\int_{-1}^{1} \left( \frac{\left( 1-s^2\right)^{k-1}}{\left( R_{2}^{2} + x^2+ 2 \, R_{2} \, x \, s\right) ^{\frac{2k-1}{2}}}   \right)  ds &= \frac{\sqrt{\pi}}{(R_{2}^2 + x^2)^{\frac{2k-1}{2}}} \, \frac{\Gamma\left(k \right)}{\Gamma\left( k + \frac{1}{2} \right)} \, {{_2\mbox{F}_1} \left(\frac{2k-1}{4},\frac{2k+1}{4};\,k + \frac{1}{2};\,\frac{4 R_{2}^2 {x}^2}{(R_{2}^2 + x^2)^2} \right) } .
\end{align}
By substituting the power series for the hypergeometric function in the above expression, we get
\begin{align}\nonumber
& \int_{-1}^{1} \left( \frac{\left( 1-s^2\right)^{k-1}}{\left( R_{2}^{2} + x^2+ 2 \, R_{2} \, x \, s\right) ^{\frac{2k-1}{2}}}   \right)  ds \\ \nonumber
&= \frac{\sqrt{\pi}}{(R_{2}^2 + x^2)^{\frac{2k-1}{2}}} \, \frac{\Gamma\left(k \right)}{\Gamma\left( k + \frac{1}{2} \right)} {\sum_{m=0}^{\infty}} \frac{\left( \frac{2k-1}{4} \right)_m \, \left( \frac{2k+1}{4} \right)_m }{\left(k + \frac{1}{2} \right)_m \, m! } \left( \frac{4 R_{2}^2 {x}^2}{(R_{2}^2 + x^2)^2} \right)^m \\ \nonumber
&= \frac{ \sqrt{\pi} \, \Gamma(k)}{\Gamma\left( k + \frac{1}{2} \right)} {\sum_{m=0}^{\infty}} \frac{\Gamma\left( \frac{2k-1}{4} +m \right) \, \Gamma \left( \frac{2k+1}{4} +m \right) \,  \Gamma \left(k + \frac{1}{2} \right) }{\Gamma\left( \frac{2k-1}{4} \right) \, \Gamma \left( \frac{2k+1}{4} \right) \, \Gamma \left(k + \frac{1}{2} +m \right) \, m! } \frac{\left( 4 R_{2}^2 {x}^2\right) ^m}{(R_{2}^2 + x^2)^{2m+ \frac{2k-1}{2}}} \\ \nonumber
&= \frac{ \sqrt{\pi} \, \Gamma(k)}{\Gamma\left( k + \frac{1}{2} \right)} {\sum_{m=0}^{\infty}} \frac{\Gamma\left( \frac{2k-1}{4} +m \right) \, \Gamma \left( \frac{2k+1}{4} +m \right) \,  \Gamma \left(k + \frac{1}{2} \right) }{\Gamma\left( \frac{2k-1}{4} \right) \, \Gamma \left( \frac{2k+1}{4} \right) \, \Gamma \left(k + \frac{1}{2} +m \right) \, m! } \frac{\left( 4 {x}^2\right) ^m}{R_{2}^{2m+ 2k-1}} \left( 1 + \frac{x^2}{R_{2}^2}\right)^{-\frac{4m + 2k - 1}{2}}  \\ \nonumber
&=\frac{ \sqrt{\pi} \, \Gamma(k)}{ R_{2}^{2k-1} \, \Gamma\left( k + \frac{1}{2} \right)} {\sum_{m,l=0}^{\infty}}\, \frac{ 4^{m} \, \Gamma\left( \frac{2k-1}{4} +m \right) \, \Gamma \left( \frac{2k+1}{4} +m \right) \,  \Gamma \left(k + \frac{1}{2} \right) \, (-1)^l }{ \Gamma\left( \frac{2k-1}{4} \right) \, \Gamma \left( \frac{2k+1}{4} \right) \, \Gamma \left(k + \frac{1}{2} +m \right) \, m!    \, l!} \frac{\Gamma (\frac{4m + 2k - 1}{2}+l)}{\Gamma (\frac{4m + 2k - 1}{2})} \left( \frac{x}{R_{2}}\right)^{2l+2m} \\ \label{eqn: lem odd 2}
 &= {\sum_{t=0}^{\infty}} \beta(t) \frac{x^{2t}}{R_{2}^{2t+2k-1}},
\end{align} 
where $\beta(t)$, $t \geq 0$, is given by
\begin{align*} 
\beta(t) = \frac{ \sqrt{\pi} \, \Gamma(k)}{ \Gamma\left( k + \frac{1}{2} \right)} {\sum_{i=0}^{t}}\, \frac{ 4^{i} \, \Gamma\left( \frac{2k-1}{4} +i \right) \, \Gamma \left( \frac{2k+1}{4} +i \right) \,  \Gamma \left(k + \frac{1}{2} \right) \, (-1)^{t-i} }{ \Gamma\left( \frac{2k-1}{4} \right) \, \Gamma \left( \frac{2k+1}{4} \right) \, \Gamma \left(k + \frac{1}{2} +i \right) \, i!    \, (t-i)!} \frac{\Gamma (\frac{4i + 2k - 1}{2}+t-i)}{\Gamma (\frac{4i + 2k - 1}{2})}.
\end{align*}
We simplify the above expression using MATHEMATICA and get
\begin{align*} 
\beta(t)=
\begin{cases}
\frac{ \sqrt{\pi} \, \Gamma(k)}{ \Gamma\left( k + \frac{1}{2} \right)}, & \text{ for } t=0, \\
0, & \text{ for } t \neq 0.
\end{cases}
\end{align*}
Hence
\begin{align*}
 \int_{-1}^{1} \left( \frac{\left( 1-s^2\right)^{k-1}}{\left( R_{2}^{2} + x^2+ 2 \, R_{2} \, x \, s\right) ^{\frac{2k-1}{2}}}   \right)  ds &= \beta(0) \frac{1}{R_{2}^{2k-1}} \\
& = \left({\frac{2k-2}{2k-1}} \, {\frac{2k-4}{2k-3}} \, \cdots {\frac{4}{3}} \right) \left(\frac{1}{R_2^{2k-1}}\right).  
\end{align*} 
\end{proof}

We denote the area element  $\sin^{2k-1} (\phi_1)\, \sin^{2k-2} (\phi_2)\, \cdots \, \sin (\phi_{2k-1}) \, d \phi_1 \, d \phi_2 \,\cdots \, d \phi_{2k}$ of $\mathbb{S}'(R_2)$ by $d\phi$ in the following proposition.

\begin{proposition} \label{prop: odd fun inequality}
Let $R_1$ and $R_2$ be defined as above and $r(p)$ denotes the distance of a point $p$ in $\mathbb{S}'(R_2)$ from the origin. For $n=2k+1, k \geq 1$, the following holds.
\begin{align}
\label{eqn:odd fun inequality}
\int_{\mathbb{S}'(R_2)}\left(\frac{1}{R_{1}^{2k-1}} - \frac{1}{{r}^{2k-1}} \right)^2 dv \geq \int_{\mathbb{S}(R_2)}\left(\frac{1}{R_{1}^{2k-1}} - \frac{1}{{R_2}^{2k-1}} \right)^2 dv.
\end{align}
\end{proposition}
\begin{proof}By substituting $r= \sqrt{R^2 + x^2+ 2 \, R \, x \cos(\phi_1)}$ and using the polar coordinate system, we have 
\begin{align*}
&\int_{\mathbb{S}'(R_2)}\left(\frac{1}{R_{1}^{2k-1}} - \frac{1}{{r}^{2k-1}} \right)^2 dv \\ 
&=\int_{0}^{2 \pi}  \cdots \int_{0}^{\pi} \int_{0}^{\pi} \left(\frac{1}{R_{1}^{2k-1}} - \frac{1}{\left( R_{2}^2 + x^2+ 2 \, R_{2} \, x \cos(\phi_1)\right) ^{\frac{2k-1}{2}}} \right)^2 \, R_{2}^{2k} \, d \phi \\
&= 4 \pi \, \frac{\pi}{2} \, \frac{4}{3} \cdots \left({\frac{2k-3}{2k-2}} \, {\frac{2k-5}{2k-4}} \, ....{\frac{\pi}{2}} \right) R_{2}^{2k} \int_{0}^{\pi} \left(\frac{1}{R_{1}^{2k-1}} - \frac{1}{\left( R_{2}^2 + x^2+ 2 \, R_{2} \, x \cos(\phi_1)\right) ^{\frac{2k-1}{2}}} \right)^2 \sin^{2k-1} (\phi_1) \, d \phi_1
\end{align*}
and
\begin{align*}
&\int_{\mathbb{S}(R_2)}\left(\frac{1}{R_{1}^{2k-1}} - \frac{1}{{R_{2}}^{2k-1}} \right)^2 dv \\
&= \int_{0}^{2 \pi} \int_{0}^{\pi} \int_{0}^{\pi} \cdots\int_{0}^{\pi} \left(\frac{1}{R_{1}^{2k-1}} - \frac{1}{ R_{2} ^{2k-1}} \right)^2 R_{2}^{2k} d \phi \\
&= 4 \pi  \, \frac{\pi}{2} \, \frac{4}{3} \cdots \left({\frac{2k-3}{2k-2}} \, {\frac{2k-5}{2k-4}} \, \cdots {\frac{\pi}{2}} \right) R_{2}^{2k} \left({\frac{2k-2}{2k-1}} \, {\frac{2k-4}{2k-3}} \, \cdots {\frac{4}{3}} \right) \left(\frac{1}{R_{1}^{2k-1}} - \frac{1}{ R_{2} ^{2k-1}} \right)^2.
\end{align*}
Thus \eqref{eqn:odd fun inequality} is equivalent to 
\begin{align}\nonumber
\label{eqn:odd fun inequality1}
\int_{0}^{\pi} \left(\frac{1}{R_{1}^{2k-1}} - \frac{1}{\left( R_{2}^2 + x^2+ 2 \, R_{2} \, x \cos(\phi_1)\right) ^{\frac{2k-1}{2}}} \right)^2 \sin^{2k-1} (\phi_1) d \phi_1 \\ 
\geq \left({\frac{2k-2}{2k-1}} \, {\frac{2k-4}{2k-3}} \, \cdots {\frac{4}{3}} \right) \left(\frac{1}{R_{1}^{2k-1}} - \frac{1}{ R ^{2k-1}} \right)^2.
\end{align}
Let $s = \cos (\phi_1)$, then the above inequality reduces to
\begin{align*}
\int_{-1}^{1} \left(\frac{1}{R_{1}^{2k-1}} - \frac{1}{\left( R_{2}^2 + x^2+ 2 \, R_{2} \, x \, s\right) ^{\frac{2k-1}{2}}} \right)^2 \left( 1-s^2\right)^{k-1}  d s \\
\geq \left({\frac{2k-2}{2k-1}} \, {\frac{2k-4}{2k-3}} \, \cdots {\frac{4}{3}} \right) \left(\frac{1}{R_{1}^{2k-1}} - \frac{1}{ R ^{2k-1}} \right)^2.
\end{align*}
This is equivalent to
\begin{align} \nonumber
\int_{-1}^{1} \left( \frac{1}{\left( R_{2}^2 + x^2+ 2 \, R_{2} \, x \, s\right) ^{2k-1}} - \frac{2}{ {R_{1}^{2k-1}} \left( R_{2}^2 + x^2+ 2 \, R_{2} \, x \, s\right) ^{\frac{2k-1}{2}}} \right) \left( 1-s^2\right)^{k-1} d s \\
\geq \left({\frac{2k-2}{2k-1}} \, {\frac{2k-4}{2k-3}} \, \cdots {\frac{4}{3}} \right) \left(\frac{1}{R_2^{4k-2}} - \frac{2}{ {{R_{1}^{2k-1}}}{R_{2} ^{2k-1}}} \right),
\end{align}
which follows from Lemma \ref{lem:odd1} and Lemma \ref{lem:odd2}. 
\end{proof} 
\begin{lemma} \label{lem:even1}
Let $x$ and $R_{2}$ be as above and $k \in \mathbb{N}$, $k \geq 2$. Then the following holds.
\begin{align}
\int_{-1}^{1} \left( \frac{\left( 1-s^2\right)^\frac{2k-3}{2}}{\left( R_{2}^2 + x^2+ 2 \, R_{2} \, x \, s\right) ^{2(k-1)}}   \right)  d s \geq
 \left({\frac{2k-3}{2k-2}} \, {\frac{2k-5}{2k-4}} \, \cdots {\frac{\pi}{2}} \right) \left(\frac{1}{R_2^{4k-4}}\right).
\end{align}
\end{lemma}
\begin{proof} By using MATHEMATICA, we get
\begin{align} \nonumber
\int_{-1}^{1} \left( \frac{\left( 1-s^2\right)^\frac{2k-3}{2}}{\left( R_{2}^2 + x^2+ 2 \, R_{2} \, x \, s\right) ^{2(k-1)}}   \right)  d s &= \frac{\sqrt{\pi}}{(R_{2}^2 + x^2)^{2k-2}} \, \frac{\Gamma\left( k - \frac{1}{2} \right)}{\Gamma\left( k \right)} \, {{_2\mbox{F}_1} \left(k -1,\, k - \frac{1}{2};\,k ;\,\frac{4 R_{2}^2 {x}^2}{(R_{2}^2 + x^2)^2} \right) }, \\  \label{eqn:lem even11}
&= {\sum_{t=0}^{\infty}} \hat{\alpha}(t) \frac{x^{2t}}{R_{2}^{2t+4k-4}},
\end{align}
where $\hat{\alpha}(t)$, $t \geq 0$ is given by
\begin{align}  \nonumber
\hat{\alpha}(t) & = \frac{ \sqrt{\pi} \, \Gamma(k - \frac{1}{2})}{ \Gamma\left( k \right)} \,  {\sum_{i=0}^{t}} \, \frac{ 4^{i} \, \Gamma \left( k +i - \frac{1}{2}\right) \,   \left(k - 1 \right) \, (-1)^{t-i} }{ \Gamma \left( k - \frac{1}{2} \right) \, \left(k - 1 +i \right) \, i! \, (t-i)!} \, \frac{\Gamma (i + 2k-2+t)}{\Gamma (2i + 2k-2)} \\ \label{eqn:lem even12}
&= \frac{ \sqrt{\pi} \, \Gamma(k - \frac{1}{2})}{ \Gamma\left( k \right)} \, \frac{(k-1)\, \Gamma ( 2k-2+t)}{\Gamma(2k-2) \, \Gamma (1+t) \, (k+t-1)}.
\end{align}
The sum of the above series is evaluated by using MATHEMATICA.

Observe that $R_2, x > 0$ and $\hat{\alpha}(t) > 0$ for all $t \geq 0$. Then it follows by \eqref{eqn:lem even11} and \eqref{eqn:lem even12} that
\begin{align*}
\int_{-1}^{1} \left( \frac{\left( 1-s^2\right)^\frac{2k-3}{2}}{\left( R_{2}^2 + x^2+ 2 \, R_{2} \, x \, s\right) ^{2(k-1)}}   \right)  d s & \geq \hat{\alpha}(0) \frac{1}{R_{2}^{4k-4}} \\
& = \left({\frac{2k-3}{2k-2}} \, {\frac{2k-5}{2k-4}} \, \cdots {\frac{\pi}{2}} \right) \, \frac{1}{R_2^{4k-4}}.
\end{align*}
This proves the lemma. 
\end{proof}

\begin{lemma} \label{lem:even2}
Let $x$ and $R_{2}$ be as above and $k \in \mathbb{N}$, $k \geq 2$. Then the following holds.
\begin{align}\label{eqn:lem even21}
\int_{-1}^{1} \left( \frac{\left( 1-s^2\right)^\frac{2k-3}{2}}{\left( R_{2}^2 + x^2+ 2 \, R_{2} \, x \, s\right) ^{k-1}}   \right)  d s = \left({\frac{2k-3}{2k-2}} \, {\frac{2k-5}{2k-4}} \, \cdots {\frac{\pi}{2}} \right) \left(\frac{1}{R_2^{2(k-1)}}\right).
\end{align}
\end{lemma}
\begin{proof} By proceeding the same way as in Lemma \ref{lem:even1}, we get
\begin{align} \nonumber
\int_{-1}^{1} \left( \frac{\left( 1-s^2\right)^\frac{2k-3}{2}}{\left( R_{2}^2 + x^2+ 2 \, R_{2} \, x \, s\right) ^{k-1}}   \right)  d s &= \frac{\sqrt{\pi}}{(R_{2}^2 + x^2)^{k-1}} \, \frac{\Gamma\left(k - \frac{1}{2}\right)}{\Gamma\left( k \right)} \, {{_2\mbox{F}_1} \left(\frac{k-1}{2},\frac{k}{2};\,k ;\,\frac{4 R_{2}^2 {x}^2}{(R_{2}^2 + x^2)^2} \right) } \\ \label{eqn:lem even22}
& = {\sum_{t=0}^{\infty}}\hat{\beta}(t) \frac{x^{2t}}{R_{2}^{2t+2k-2}},
\end{align}
where $\hat{\beta}(t)$ is given by
\begin{align} \nonumber
\hat{\beta}(t) & = \frac{ \sqrt{\pi} \, \Gamma(k - \frac{1}{2})}{ \Gamma\left( k \right)} {\sum_{i=0}^{t}}\, \frac{ 4^{i} \, \Gamma\left( \frac{k-1}{2} +i \right) \, \Gamma \left( \frac{k}{2} +i \right) \,  \Gamma \left(k \right) \, (-1)^{t-i} }{ \Gamma\left( \frac{k-1}{2} \right) \, \Gamma \left( \frac{k}{2} \right) \, \Gamma \left(k +i \right) \, i!    \, (t-i)!} \frac{\Gamma (i + k + t-1)}{\Gamma (2 i + k -1)} \\ \label{eqn:lem even23}
& =
\begin{cases}
\frac{ \sqrt{\pi} \, \Gamma(k - \frac{1}{2})}{ \Gamma\left( k \right)}, & \text{ for } t=0, \\
0, & \text{ for } t \neq 0.
\end{cases}
\end{align}
Hence the lemma follows by substituting the values of $\hat{\beta}(t)$ from \eqref{eqn:lem even23} into \eqref{eqn:lem even22}. 
\end{proof}

\begin{proposition} \label{prop: even fun inequality} Let $R_1$ and $R_2$ be defined as above and $r(p)$ be the distance of a point $p$ in $\mathbb{S}'(R_2)$ from the origin. For $n=2k, k \geq 2$ the following holds.
\begin{align}
\label{eqn:even fun inequality}
\int_{\mathbb{S}'(R_2)}\left(\frac{1}{R_{1}^{2(k-1)}} - \frac{1}{{r}^{2(k-1)}} \right)^2 dv \geq \int_{\mathbb{S}(R_2)}\left(\frac{1}{R_{1}^{2(k-1)}} - \frac{1}{{R_2}^{2(k-1)}} \right)^2 dv.
\end{align}
\end{proposition}
\begin{proof} By proceeding in the same way as in Proposition \ref{prop: odd fun inequality}, inequality \eqref{eqn:even fun inequality} is equivalent to
\begin{align*} \nonumber
\int_{-1}^{1} \left( \frac{1}{\left( R_{2}^2 + x^2+ 2 \, R_{2} \, x \, s\right) ^{2(k-1)}} - \frac{2}{ {R_{1}^{2(k-1)}} \left( R_{2}^2 + x^2+ 2 \, R_{2} \, x \, s\right) ^{k-1}} \right) \left( 1-s^2\right)^{\frac{2k-3}{2}} d s \\
\geq \left({\frac{2k-3}{2k-2}} \, {\frac{2k-5}{2k-4}} \, \cdots {\frac{\pi}{2}} \right) \left(\frac{1}{R_2^{4(k-1)}} - \frac{2}{ {{R_{1}^{2(k-1)}}}{R_{2} ^{2(k-1)}}} \right).
\end{align*}
This follows from Lemma \ref{lem:even1} and Lemma \ref{lem:even2}.  
\end{proof}

\subsection{Proof of Theorem \ref{thm:steklov}}
\begin{proof}
For $n>2$, since $\left(\frac{1}{R_{1}^{n-2}} - \frac{1}{{r}^{n-2}} \right) = 0 $ on $\mathbb{S}(R_{1})$, the function $\left(\frac{1}{R_{1}^{n-2}} - \frac{1}{{r}^{n-2}} \right)$ satisfies the condition for the test function in \eqref{var char for steklov}. Thus by considering $\left(\frac{1}{R_{1}^{n-2}} - \frac{1}{{r}^{n-2}} \right) $ as test function, we have 
\begin{align*}
\tau_{1} (D) \leq \frac{\displaystyle\int_{D}{\left\Vert\nabla \left( \frac{1}{R_{1}^{n-2}} - \frac{1}{r^{n-2}} \right) \right \Vert^2}dV}{\displaystyle\int_{\mathbb{S}'(R_2)}\left(\frac{1}{R_{1}^{n-2}} - \frac{1}{{r}^{n-2}} \right)^2 dv}.
\end{align*} 

By Proposition \ref{prop: odd fun inequality} and \ref{prop: even fun inequality}, we have
\begin{align} \label{steklov fun est}
\int_{\mathbb{S}'(R_2)}\left(\frac{1}{R_{1}^{n-2}} - \frac{1}{{r}^{n-2}} \right)^2 dv \geq \int_{\mathbb{S}(R_2)}\left(\frac{1}{R_{1}^{n-2}} - \frac{1}{{r}^{n-2}} \right)^2 dv,
\end{align}
for all $n > 2$.

Next we estimate $\displaystyle\int_{D}{\left\Vert\nabla \left( \frac{1}{R_{1}^{n-2}} - \frac{1}{r^{n-2}} \right) \right\Vert^2} dV$.
\begin{align*} 
\int_{D}{\left\Vert\nabla \left( \frac{1}{R_{1}^{n-2}} - \frac{1}{r^{n-2}} \right) \right\Vert^2} dV &= \int_{D} \left( \frac{n-2}{r^{n-1}}\right)^{2} dV \\ \nonumber
&=  \int_{D \cap \Omega_{0}} \left( \frac{n-2}{r^{n-1}}\right)^{2} dV + \int_{D\setminus{D\cap \Omega_{0}}} \left( \frac{n-2}{r^{n-1}}\right)^{2}dV. 
\end{align*}
 Since $r \geq R_{2}$  in $(D\setminus{D\cap \Omega_{0}})$,  we have
\begin{align*}
\int_{D}{\left\|\nabla \left( \frac{1}{R_{1}^{n-2}} - \frac{1}{r^{n-2}} \right) \right\|^2}dV & \leq \int_{\Omega_{0}} \left( \frac{n-2}{r^{n-1}}\right)^{2} dV  - \int_ {\Omega_{0} \setminus {D\cap \Omega_{0}}} \left( \frac{n-2}{r^{n-1}}\right)^{2} dV + \int_{D\setminus{D\cap \Omega_{0}}} \left( \frac{n-2}{R_{2}^{n-1}}\right)^{2}dV.
\end{align*}
Since Vol$(\Omega_{0} \setminus {D\cap \Omega_{0}})$ =  Vol$(D\setminus{D\cap \Omega_{0}})$, it follows that
\begin{align*}
\int_{D}{\left\|\nabla \left( \frac{1}{R_{1}^{n-2}} - \frac{1}{r^{n-2}} \right) \right\|^2} dV & \leq \int_{\Omega_{0}} \left( \frac{n-2}{r^{n-1}}\right)^{2} dV  + \int_ {\Omega_{0} \setminus {D\cap \Omega_{0}}} \left[ \left( \frac{n-2}{R_{2}^{n-1}}\right)^{2} - \left( \frac{n-2}{r^{n-1}}\right)^{2}\right]  dV.
\end{align*}
As $r \leq R_{2}$ in $(\Omega_{0} \setminus{D\cap \Omega_{0}})$, we get 
\begin{align} \nonumber
\int_{D}{\left\|\nabla \left( \frac{1}{R_{1}^{n-2}} - \frac{1}{r^{n-2}} \right) \right\|^2} dV & \leq \int_{\Omega_{0}} \left( \frac{n-2}{r^{n-1}}\right)^{2} dV \\ \label{steklov grad est.}
& = \int_{\Omega_{0}}{\left\|\nabla \left( \frac{1}{R_{1}^{n-2}} - \frac{1}{r^{n-2}} \right) \right\|^2} dV.
\end{align}
By inequality\eqref{steklov fun est} and \eqref{steklov grad est.}, we have
\begin{align*}
 \frac{\displaystyle\int_{D}{\left\|\nabla \left( \frac{1}{R_{1}^{n-2}} - \frac{1}{r^{n-2}} \right) \right\|^2} dV}{\displaystyle\int_{\mathbb{S}'(R_2)}\left(\frac{1}{R_{1}^{n-2}} - \frac{1}{{r}^{n-2}} \right)^2 dv} & \leq \frac{\displaystyle\int_{\Omega_{0}}{\left\|\nabla \left( \frac{1}{R_{1}^{n-2}} - \frac{1}{r^{n-2}} \right) \right\|^2}dV}{\displaystyle\int_{\mathbb{S}(R_2)}\left(\frac{1}{R_{1}^{n-2}} - \frac{1}{{r}^{n-2}} \right)^2 dv},  \\\\
 \tau_{1} (D) & \leq  \tau_{1} (\Omega_{0}).
\end{align*}
Further, equality holds if and only if Vol$(\Omega_{0} \setminus {D\cap \Omega_{0}}) = 0$. Since $\Omega_{0}$ and $D$ are open subsets of $\mathbb{R}^n$, Vol$(\Omega_{0} \setminus {D\cap \Omega_{0}}) = 0$ holds if and only if $D = \Omega_{0}$. 
Hence the theorem follows. 
\end{proof}

\begin{remark}
For $n=2$, we failed to prove above theorem since we don't have an analogue of \eqref{eqn:even fun inequality} if we consider $(\ln r - \ln R_{1})$ as a test function for the variational characterization of first eigenvalue.
\end{remark}
\section{Neumann eigenvalue problem}
We prove Theorem \ref{thm:neumann} in this section. Throughout this section, $(\mathbb{M}, ds^2)$ denotes a non-compact rank-$1$ symmetric space with dim $\mathbb{M} = m = kn$ where $k= \mbox{dim}_{\mathbb{R}} {\mathbb{K}}$; $\mathbb{K} = \mathbb{R}, \mathbb{C}, \mathbb{H}$ or $\mathbb{C}a$, and metric $ds^{2}$ is such that $-4 \leq K_{\mathbb{M}} \leq -1$. For a domain $\Omega$, the variational characterization for the first non-zero eigenvalue of \eqref{eqn:neumann} is given by
\begin{align} \label{var char for neumann}
\mu_1 = \inf \left\lbrace \frac{\int_{\Omega}{\|\nabla u\|^2} \,dv}{\int_{\Omega}{|u|^2} \, dv} \bigm| \, \int_{\Omega} u \, dv =0 \right\rbrace.
\end{align}

We start with the study of first non-constant eigenfunction of eigenvalue problem \eqref{eqn:neumann} on an annulus $\Omega_{0} \subset M$, where $M = \mathbb{M}$ or $\mathbb{R}^m$. Then we find test functions for a  geodesically symmetric doubly connected domain $\Omega \subset M$, and obtain an upper bound for the first non-zero eigenvalue of Neumann problem on $\Omega$. 

Let $p \in M$. If $\gamma(r) $ is a unit speed geodesic starting at $p$, then the Riemannian volume density at $\gamma (r)$ is 
\begin{align*}
J(r) =
\begin{cases}
 \sinh^{kn-1} r \,\cosh^{k-1} r & \text{ for } M = \mathbb{M}, \\
 r^{m-1} & \text{ for } M = \mathbb{R}^m,
\end{cases}
\end{align*}
and the first eigenvalue $\lambda_{1}(S(r))$ of the geodesic sphere $S(p,r)$ is 
\begin{align*}
\lambda_{1}(S(r))=
\begin{cases}
\left( \frac{kn-1}{\sinh^{2} r} - \frac{k-1}{\cosh^{2} r }\right) & \text{ for } M = \mathbb{M}, \\
\frac{m-1}{r^{2}} & \text{ for } M = \mathbb{R}^m.
\end{cases} 
\end{align*}
For a proof see \cite{ARGS}. 
\subsection{Neumann eigenvalue problem on an annulus}
Let $\Omega_{0} = \left\lbrace q \in M \mid r_1 < d(p,q) < r_2 \right\rbrace  $ be an annular domain centered at point $p \in M$. Consider the following problem
\begin{align} \label{eq2l}
\begin{array}{rcll}
\Delta u &=& \mu \ u \, &\mbox{ in } \, \Omega_{0}, \\
\frac{\partial u}{\partial \nu} &=& 0 \,  &\mbox{ on } \, {\partial \Omega_{0}},
\end{array}
\end{align}
where $\nu$ is the outward unit normal on $\partial \Omega_{0} $.
We will study the first non-zero eigenvalue $\mu_1(\Omega_{0}) $ of \eqref{eq2l}.

The first non-zero eigenvalue of \eqref{eq2l} is, by separation of variables technique, either the second eigenvalue $\tau_2$ of
\begin{align}\label{eq22}
- \frac{1}{J(r)} \frac{\partial}{\partial r} \left( J(r) \frac{\partial}{\partial r} f \right) = \tau f,
\end{align}
where $f$ is defined on $[r_1,r_2]$ with $f'(r_1)=0= f'(r_2)$ or the first eigenvalue $\mu_1$ of
\begin{align}\label{eq23}
- \frac{1}{J(r)} \frac{\partial}{\partial r} \left( J(r) \frac{\partial}{\partial r} g \right) + \lambda_1(S(r)) g = \mu g,
\end{align}
where $\lambda_1(S(r))$ is the first non-zero eigenvalue of Laplacian on $S(r)$ corresponding to eigenfunctions $\frac{x_i}{r}$, where $(x_1, x_2,\ldots, x_m)$ is the normal coordinate system centered at $p$ and the function $g$ is defined on $[r_1,r_2]$ with $g'(r_1)=0= g'(r_2)$.

Let $g$ be an eigenfunction corresponding to the first eigenvalue $\mu_1$ of \eqref{eq23}. Then $g$ does not change sign in $(r_1,r_2)$. We may assume that $g$ is positive on $(r_1,r_2)$.
Let $f$ be an eigenfunction corresponding to the second eigenvalue $\tau_2$ of \eqref{eq22}. Then the function $f$ must change sign in $(r_1,r_2)$. Let $a \in (r_1,r_2) $ be such that $f$ is positive on interval $(r_1,a)$ and negative on $(a,r_2)$. Then $f'(a)<0$. 
Let $h$ be a non-trivial solution of
\begin{align}\label{eq24}
- \frac{1}{J(r)} \frac{\partial}{\partial r} \left( J(r) \frac{\partial}{\partial r} h \right) = \mu_1 h .
\end{align}
By differentiating above equation, we get 
\begin{align*}
- \frac{1}{J(r)} \frac{\partial}{\partial r} \left( J(r) \frac{\partial}{\partial r} h' \right) + \lambda_1(S(r)) h' = \mu_1 h'.
\end{align*}
Since $h'$ and $g$ satisfy \eqref{eq23} with the same eigenvalue, we may assume that $h'=g$. Using \eqref{eq24} and the fact that $f$ satisfies \eqref{eq22} with eigenvalue $\tau_2$, we get
\begin{align}\label{eq25}
\frac{\partial}{\partial r} \left( J(r) \left( f h'-h f'\right) \right) = \left( \tau_2 - \mu_1 \right) f h J(r).
\end{align}
By integrating above equation and using the values of $f$, we obtain
\begin{align}\label{eq26}
\begin{split}
\int_{r_1}^{a} \left( \tau_2 - \mu_1 \right) f h J(r) &= \left[  J(r) \left( f h'-h f'\right) \right]_{r_1}^{a} \\
&= -J(a) h(a) f'(a) - J(r_1) f(r_1) h'(r_1).
\end{split}
\end{align}
Since
\begin{align*}
\mu_1 h(r_2) = -g'(r_2)- \frac{J'(r_2)}{J(r_2)} g(r_2), \, g'(r_2)=0
\end{align*}
and $g$ is positive in the interval $(r_1,r_2)$, we have $ h(r_2)<0$. As $h'=g$ and $g$ is a positive function, it follows that $h$ is an increasing function. So $h\leq 0$ in $(r_1,r_2)$. Then from \eqref{eq26} it follows that $\tau_2 > \mu_1$. Thus $\mu_1= \mu_1(\Omega_{0})$. 

\begin{remark}
Let $g(r)$ be the solution of \eqref{eq23} and $(x_1, x_2,\ldots, x_m)$ be the geodesic normal coordinates centered at $p$.  Then the function $g(r) \frac{x_i}{r}$, $1 \leq i \leq m$  are eigenfunctions of \eqref{eq2l} corresponding to the first non-zero eigenvalue.
\end{remark}

We now prove that $g$ is an increasing function on $(r_1,r_2)$.
\begin{lemma}
$g'(r)>0$ on $(r_1,r_2)$ and $\mu_1(\Omega_{0})> \lambda_1(S(r_2))$.
\end{lemma}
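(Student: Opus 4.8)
The plan is to read both assertions off the divergence form of \eqref{eq23}. First I would multiply \eqref{eq23} by $J(r)$, so that the first eigenfunction $g$ satisfies
\[
\left( J(r)\, g'(r)\right)' = \left( \lambda_1(S(r)) - \mu_1\right) J(r)\, g(r),
\]
and put $F(r):=J(r)\,g'(r)$. The Neumann conditions $g'(r_1)=g'(r_2)=0$ give $F(r_1)=F(r_2)=0$, while $J>0$ and $g>0$ on $(r_1,r_2)$ (the latter already recorded above) mean that the sign of $F'$ equals the sign of $\lambda_1(S(r))-\mu_1$. Thus everything reduces to locating $\mu_1$ relative to the values of $\lambda_1(S(\cdot))$.

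Next I would record that $r\mapsto\lambda_1(S(r))$ is strictly decreasing. For $M=\mathbb{R}^n$ this is clear from $\lambda_1(S(r))=(n-1)/r^2$; for $M=\mathbb{M}$ a direct differentiation of $\frac{kn-1}{\sinh^2 r}-\frac{k-1}{\cosh^2 r}$ reduces the claim to $(kn-1)\cosh^4 r>(k-1)\sinh^4 r$, which holds since $\cosh r>\sinh r>0$ and $kn-1\ge k-1$. Consequently $\lambda_1(S(r))-\mu_1$ changes sign at most once, and only from $+$ to $-$. To pin this down I would rule out a constant sign: if $\lambda_1(S(r))\ge\mu_1$ on $[r_1,r_2]$, then strict monotonicity gives $\lambda_1(S(r))-\mu_1>0$ on $(r_1,r_2)$, so $F$ is strictly increasing and cannot vanish at both endpoints; the case $\lambda_1(S(r))\le\mu_1$ is symmetric. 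Hence $\lambda_1(S(r))-\mu_1$ is positive near $r_1$ and negative near $r_2$, forcing $\lambda_1(S(r_1))>\mu_1>\lambda_1(S(r_2))$. Since $\mu_1=\mu_1(\Omega_0)$, this already yields the second assertion $\mu_1(\Omega_0)>\lambda_1(S(r_2))$.

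Finally, for $g'>0$, the above shows there is a unique $r^{\ast}\in(r_1,r_2)$ with $\lambda_1(S(r^{\ast}))=\mu_1$, with $\lambda_1(S(r))-\mu_1>0$ for $r<r^{\ast}$ and $<0$ for $r>r^{\ast}$. Therefore $F$ is strictly increasing on $(r_1,r^{\ast})$ and strictly decreasing on $(r^{\ast},r_2)$, and together with $F(r_1)=F(r_2)=0$ this forces $F>0$ throughout $(r_1,r_2)$, i.e. $g'(r)=F(r)/J(r)>0$ there. The whole argument is a Sturm-type sign analysis once \eqref{eq23} is written in divergence form, so the only point requiring care is the strict monotonicity of $\lambda_1(S(r))$ in the rank-one case — the single place where the geometry of $\mathbb{M}$ actually enters — and I expect that to be the main (though mild) obstacle.
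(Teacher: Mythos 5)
Your proof is correct and follows essentially the same route as the paper: both set $F(r)=J(r)g'(r)$ (the paper calls it $\Psi$), use the vanishing of $F$ at both endpoints together with the strict monotonicity of $\lambda_1(S(r))$ to force exactly one sign change of $\lambda_1(S(r))-\mu_1$ from $+$ to $-$, and then read off both conclusions. The only difference is that you explicitly verify the strict decrease of $\lambda_1(S(r))$ in the rank-one case, which the paper asserts without computation.
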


\begin{proof} The function $g$ satisfies the equation
\begin{align}\label{eq27}
\frac{\partial}{\partial r} \left( J(r) \frac{\partial}{\partial r} g \right) = \left( \lambda_1 (S(r))- \mu_1(\Omega_{0})\right) g(r) J(r)
\end{align}
with $g'(r_1)=0= g'(r_2)$. Define $\Psi(r)= J(r) g'(r)$. Then $\Psi(r_1)=0= \Psi(r_2)$. Since $\Psi $ can not be identically zero, $\Psi '(r)$ must change sign atleast once in the interval $(r_1,r_2)$. Hence $\left( \lambda_1 (S(r))- \mu_1(\Omega_{0})\right)$ must change sign atleast once in the interval $(r_1,r_2)$. Since $ \lambda_1 (S(r))$ is a strictly decreasing function, $\left( \lambda_1 (S(r_{1}))- \mu_1(\Omega_{0})\right)<0 $ implies $\left( \lambda_1 (S(r))- \mu_1(\Omega_{0})\right)<0 \mbox{ for all } r \in (r_1,r_2)$. This contradicts to the fact that $\left( \lambda_1 (S(r))- \mu_1(\Omega_{0})\right)$ change sign atleast once in the interval $(r_1,r_2)$. Thus $\left( \lambda_1 (S(r))- \mu_1(\Omega_{0})\right)$ must be positive on an interval  $(r_1,b)$ for some point $b \in (r_1,r_2)$ and negative on $(b,r_2)$. Hence $\mu_1(\Omega_{0})> \lambda_1(S(r_2))$. 
 
 Further, $\Psi '(r)$ is positive on the interval $(r_1,b)$ and negative on $(b,r_2)$. Since $\Psi$ is an increasing function in $(r_1,b)$ and a decreasing function in $(b,r_2)$, it follows that $\Psi(r)$ is positive on $(r_1,r_2)$. Hence $g'(r)>0$ on $(r_1,r_2)$. 
\end{proof} 

\subsection{Neumann eigenvalue problem on geodesically symmetric domain in $\mathbb{R}^m$ and non-compact Rank-1 symmetric spaces} 

We prove Theorem \ref{thm:neumann} in this subsection. We begin with the following lemma, which is crucial to prove the theorem.

Let $g$ be the first non-constant eigenfunction of \eqref{eq23} on $[r_1,r_2]$. Let $
B(r)= (g'(r))^2 + \lambda_{1}(S(r)) \, g^{2}(r)
$.

\begin{lemma} \label{lem: for function b(r)}
$B'(r) \leq 0$ for $r \in [r_1,r_2]$.
\end{lemma}
\begin{proof} We prove lemma for $M= \mathbb{R}^m$ and $M= \mathbb{M}$. For $M = \mathbb{R}^m$, $
B(r)$ takes the form $(g'(r))^2 + \frac{m-1}{r^2} \, g^{2}(r)
$. Then
\begin{align*}
B'(r) &= 2 \, g'(r)  \left[  \left( \frac{m-1}{r^2}- \mu_1(\Omega_{0})\right) g(r) - \frac{m-1}{r} g'(r)\right]  + 2 \, g(r) \, g'(r) \left( \frac{m-1}{r^{2} }\right) -2 \, g^{2}(r)  \left( \frac{m-1}{r^{3} }\right) \\
& \leq 4 \,\left( \frac{m-1}{r^{2} }\right) g(r) \, g'(r)  -2 \,\left( \frac{m-1}{r^{3} }\right) g^{2}(r)   -2 \, \left( \frac{m-1}{r}\right)  g'^{2}(r) \\
&\leq 0.
\end{align*}

For $M = \mathbb{M}$, $
B(r)$ takes the form $ (g'(r))^2 + \left( \frac{kn-1}{\sinh^{2} r} - \frac{k-1}{\cosh^{2} r }\right) \, g^{2}(r)$. Then
\begin{align*}
B'(r) &= 2 \, g'(r)  \left( \left( \lambda_1 (S(r))- \mu_1(\Omega_{0})\right) g(r) - \frac{J'(r)}{J(r)} g'(r)\right) + 2 \, g(r) \, g'(r) \left( \frac{kn-1}{\sinh^{2} r} - \frac{k-1}{\cosh^{2} r }\right)\\ 
&\quad -2 \, g^{2}(r)  \left( \frac{(kn-1)\cosh r}{\sinh^{3} r} - \frac{(k-1)\sinh r}{\cosh^{3} r }\right) \\
&= 4 \, g(r) \, g'(r) \left( \frac{kn-1}{\sinh^{2} r} - \frac{k-1}{\cosh^{2} r }\right) - 2 \, g(r) \, g'(r) \, \mu_1(\Omega_{0}) \\
&\quad - 2 \, (g'(r))^2 \left( \frac{(kn-1) \cosh r}{\sinh r} + \frac{(k-1)\sinh r}{\cosh r}\right) -2 \, g^{2}(r) \left( \frac{(kn-1)\cosh r}{\sinh^{3} r} - \frac{(k-1)\sinh r}{\cosh^{3} r }\right)\\
& \leq \frac{2(kn-1)}{\sinh^3 r }\left( 2 \, g(r) \, g'(r) \, \sinh r - (g'(r))^2 \, \cosh r \, \sinh^{2} r - g^{2}(r) \, \cosh r \right)  \\
&\quad - \frac{2(k-1)}{\cosh^3 r } \left( 2 \, g(r) \, g'(r) \, \cosh r + (g'(r))^2 \, \sinh r \, \cosh^{2} r - g^{2}(r) \, \sinh r \right).
\end{align*}
Since $\cosh r \geq 1$, it follows that
\begin{align*}
B'(r) &\leq \frac{2k(n-1)}{\sinh^3 r }\left( 2 \, g(r) \, g'(r) \, \sinh r - (g'(r))^2  \, \sinh^{2} r - g^{2}(r) \right) \\
&\quad + \frac{2(k-1)}{\sinh^3 r }\left( 2 \, g(r) \, g'(r) \, \sinh r - (g'(r))^2 \, \cosh r \, \sinh^{2} r - g^{2}(r) \, \cosh r \right)                                                             \\
&\quad - \frac{2(k-1)}{\cosh^3 r } \left( 2 \, g(r) \, g'(r) \, \cosh r + (g'(r))^2 \, \sinh r \, \cosh^{2} r - g^{2}(r) \, \sinh r \right)\\
& = - \frac{2k(n-1)}{\sinh^3 r } \left(g'(r)  \, \sinh r - g(r) \right)^{2} - 2 \, (k-1) \left( 2 \, g(r) \, g'(r)\left( \frac{1}{\cosh^{2} r} - \frac{1}{\sinh^{2} r}\right)\right. \\
 &\quad \left. + (g'(r))^2 \left( \frac{\cosh r}{\sinh r} + \frac{\sinh r}{\cosh r}\right) +g^{2}(r)\left(\frac{\cosh r}{\sinh^3 r} - \frac{\sinh r}{\cosh^3 r} \right)  \right) \\
& \leq - 2 \, (k-1) \left( \frac{- 8 \, g(r) \, g'(r)}{\sinh^{2} {2r}} +  (g'(r))^2 \, \frac{2 \, \cosh {2r}}{\sinh {2r}} + g^{2}(r)\, \frac{8 \,  \cosh {2r} }{\sinh^{3} {2r}} \right) \\
& \leq \frac{-4 \, (k-1)}{\sinh^{3} {2r}} \left( 2 \, g(r) - g'(r) \, \sinh {2r}\right)^{2} \\
& \leq 0.         
\end{align*}
This proves the lemma. 
\end{proof}
\vspace{0.1 cm}

\begin{proof}[Proof of Theorem \ref{thm:neumann}]
Let $D$ be a domain in $M$ which is geodesically symmetric with respect to a point $p \in M$ and $B_0$ be a ball of radius $r_1$ contained in $D$ with centre at $p$. Let $B(r_2)$ be a ball in $M$ of radius $r_2$, centered at $p$ such that $\text{Vol}(D)= \text{ Vol}(B(r_2))$. Define $\Omega_{0} = (B(r_2)) \setminus \overline{B_0} $. 

Define the function $h$ on $\Omega = D \setminus \overline{B_0}$ by
$$
h(x)=
\begin{cases}
g(\Vert x \Vert),  & \text{for } \Vert x \Vert\leq r_2 \\
g(r_2),  & \text{for } \Vert x \Vert \geq r_2 .\\
\end{cases}
$$
We observe that $h(x)$ is a radial function and denote it by $h(r)$. 
Since domain $D$ is geodesically symmetric with respect to the point $p$, it follows that 
$
\int_{\Omega} h(r) \, \frac{x_i}{r} \, dv = 0,
$
where $(x_1, x_2, \ldots,x_m)$ is the normal coordinate system centered at point $p$. 
Since $h(r) \, \frac{x_i}{r}$ satisfies the property of the test function in \eqref{var char for neumann}, the variational formulation of $\mu_1(\Omega)$ gives
\begin{align} \nonumber
\mu_1(\Omega) \, {\sum_{i=1}^{m}}  \int_{\Omega} \left( h(r) \frac{x_i}{r} \right)^2 \, dv & \leq {\sum_{i=1}^{m}} \int_{\Omega} \left\Vert {\nabla  \left( h(r) \frac{x_i}{r} \right) }  \right\Vert^{2} dv, \\ \label{variation char neumann}
\mu_1(\Omega) \int_{\Omega} h^{2}(r) \, dv  & \leq {\sum_{i=1}^{m}} \int_{\Omega} \left\Vert \nabla  \left( h(r) \frac{x_i}{r} \right)   \right\Vert^{2} dv.
\end{align} 

Next we estimate $\int_{\Omega}  h^{2}(r) dv$ and ${\sum_{i=1}^{m}} \int_{\Omega} \Vert \nabla  \left( h(r) \frac{x_i}{r} \right)   \Vert^{2} dv$. \\

\textbf{Estimate for $\int_{\Omega}  h^{2}(r) dv$:}
\begin{align*} 
\int_{\Omega}  h^{2}(r) dv &= \int_{\Omega_{0} \cap \Omega}  h^{2}(r) dv + \int_{\Omega \setminus (\Omega_{0} \cap \Omega)}  h^{2}(r) dv \\ 
& = \int_{\Omega_{0}}  h^{2}(r) dv - \int_{\Omega_{0} \setminus (\Omega_{0} \cap \Omega)}  h^{2}(r) dv + \int_{\Omega \setminus (\Omega_{0} \cap \Omega)}  h^{2}(r) dv \\ 
& = \int_{\Omega_{0}}  g^{2}(r) dv - \int_{\Omega_{0} \setminus (\Omega_{0} \cap \Omega)} g^{2}(r) dv + \int_{\Omega \setminus (\Omega_{0} \cap \Omega)}  g^{2}(r_2)  dv.
\end{align*}
Since Vol$(\Omega_{0} \setminus {\Omega_{0}\cap \Omega})$ =  Vol$(\Omega\setminus{\Omega_{0}\cap \Omega})$, it follows that
\begin{align} \nonumber
\int_{\Omega}  h^{2}(r) dv& = \int_{\Omega_{0}}  g^{2}(r) dv + \int_{\Omega_{0} \setminus (\Omega_{0} \cap \Omega)} \left( g^{2}(r_2) - g^{2}(r) \right) dv \\ \label{est fun neumann}
& \geq \int_{\Omega_{0}}  g^{2}(r) dv,
\end{align}
where the last inequality follows from the fact that $g$ is an increasing function on $[r_1, r_2]$. \\

\textbf{Estimate for ${\sum_{i=1}^{m}} \int_{\Omega} \Vert \nabla  \left( h(r) \frac{x_i}{r} \right)   \Vert^{2} dv$:} 

Note that
\begin{align} \label{est. grad neumann1}
{\sum_{i=1}^{m}} \int_{\Omega} \left\Vert \nabla  \left( h(r) \frac{x_i}{r} \right)   \right\Vert^{2} dv =  \int_{\Omega}\left( (h'(r))^2 + \lambda_{1}(S(r)) h^{2} (r) \right) dv .
\end{align} 
Let $B(r) = (h'(r))^2 + \lambda_{1}(S(r)) h^{2} (r)$. Note that $B(r)$ is a decreasing function on $[r_1, r_2].$ For $r \geq r_2$, $h(r)$ is a constant function and $\lambda_{1} S(r)$ is a decreasing function of $r$. This implies that $B(r)$ is a decreasing function on $r \geq r_2$. Thus \eqref{est. grad neumann1} can be written as
\begin{align} \nonumber
\int_{\Omega} B(r) dv & =  \int_{\Omega_{0}}  B(r) dv- \int_{\Omega_{0} \setminus (\Omega_{0} \cap \Omega)}  B(r) dv + \int_{\Omega \setminus (\Omega_{0} \cap \Omega)}  B(r) dv.
\end{align}
As $\, B(r) \leq B(r_2)$ on  ${\Omega \setminus (\Omega_{0} \cap \Omega)}$ and Vol$(\Omega_{0} \setminus {\Omega_{0}\cap \Omega})$ =  Vol$(\Omega\setminus{\Omega_{0}\cap \Omega})$, we get 
\begin{align*}
\int_{\Omega} B(r) dv & \leq \int_{\Omega_{0}}  B(r) dv + \int_{\Omega_{0} \setminus (\Omega_{0} \cap \Omega)}  (B(r_2) - B(r)) dv. 
\end{align*}
Since  $B(r) \geq B(r_2)$ on  ${\Omega_{0} \setminus (\Omega_{0} \cap \Omega)}$, it follows that
\begin{align*}
\int_{\Omega} B(r) dv& \leq \int_{\Omega_{0}}  B(r) dv.
\end{align*}
By substituting above values in \eqref{variation char neumann}, we have
\begin{align*}
\mu_1(\Omega)   & \leq  \frac{{\sum_{i=1}^{m}} \int_{\Omega_{0}} \Vert \nabla  \left( g(r) \frac{x_i}{r} \right)   \Vert^{2} dv}{ \int_{\Omega_{0}}  g^{2}(r) dv} =  \mu_1(\Omega_{0}).
\end{align*}
Further, equality holds if and only if Vol$(\Omega_{0} \setminus {\Omega_{0}\cap \Omega}) = 0$, which is true if and only if $\Omega = \Omega_{0}$. 
\end{proof}

\begin{remark}
In the case of compact rank-$1$ symmetric space, we don't have an analogue of Lemma \ref{lem: for function b(r)}. Hence the above proof fails for the compact rank-$1$ symmetric spaces.
\end{remark}

\section*{Acknowledgement}
The first author is supported by UGC, government of India(Award letter no- 2-62/98(SA-I)).





\end{document}